\theoremstyle{plain}
\newtheorem{thm}{Theorem}[section]
\newtheorem{lem}[thm]{Lemma}
\newtheorem{prop}[thm]{Proposition}
\newcommand{\Sth}{S^3}
\newcommand{\GK}{\Gamma_K}
\newcommand{\GDS}{\Gamma_{s}}
\newcommand{\GDF}{\Gamma_{f}}
\newcommand{\Hth}{\mathbb{H}^3}
\newcommand{\PSLTC}{PSL(2, \mathbb{C})}
\begin{document}

\title{On knot complements that decompose into regular ideal dodecahedra}

\author{Neil Hoffman}
\address{Max Planck Institute for Mathematics\\
Vivatsgasse 7\\
53111 Bonn\\
Germany}
\email{nhoffman@mpim-bonn.mpg.de
}

\date{\today}

\begin{abstract} 
Aitchison and Rubinstein constructed two knot complements that can be decomposed into two regular ideal dodecahedra. 
This paper shows that these knot complements are the only knot complements that decompose into $n$ regular ideal dodecahedra, providing a partial solution to a conjecture of Neumann and Reid.
\end{abstract}

\maketitle

\section{Introduction}\label{sect:intro}


Coxeter observed that hyperbolic 3-space can be tessellated by regular ideal tetrahedra, cubes, octahedra, and dodecahedra (see \cite{Coxeter1956}).
Given that the figure 8 knot complement admits a 
geometric structure made up of two regular ideal tetrahedra (see \cite{Riley}), one may ask,``How many other knot complements can be decomposed into regular ideal polyhedra?" In \cite{AR}, Aitchison 
and Rubinstein examine a pair of knot complements that can be decomposed into regular ideal 
dodecahedra 
 and described link complements that 
can be decomposed into regular ideal octahedra and cubes. The links described are well known. In fact, the complement of the Borromean rings are cited 
for the octahedral example and the complements of links $8_4 ^3$ and $8^4 _1$ are exhibited as the cubical examples. 
Using the computer software snap (see \cite{snapArt}), we can see 
that these links are in fact arithmetic, i.e. a conjugate of their respective fundamental groups in $\PSLTC$ shares a finite index subgroup with a Bianchi group. Since the figure 8 knot 
complement is only arithmetic knot complement (see \cite{Reid1}), to answer the question we only need to consider knot complements that decompose into regular 
ideal dodecahedra. 

This question is also of recent interest. Indeed, Neumann and Reid conjecture a stronger result by positing that the dodecahedral knots are the only knot complements admitting hidden symmetries (see \cite[Conj 1.3]{BBCW} and $\S$\ref{sect:prelim}). Furthermore, a negative answer to such a question would also provide evidence to a related conjecture of Reid and Walsh that every commensurability class contains at most three hyperbolic knot complements (see \cite[Conj 5.2]{RW} and $\S$\ref{sect:prelim}). Here, we say two finite volume hyperbolic 3-orbifolds are commensurable if they share a common finite sheeted cover. Two groups are commensurable if they are the fundamental groups of commensurable orbifolds and the equivalence relation on finite volume hyperbolic 3-orbifolds (or their corresponding fundamental groups) determined by commensurability partitions 3-orbifolds (or the associated groups) into commensurability classes.


The main result of this paper provides a partial solution to the conjecture by classifying all knot complements that can be decomposed into regular ideal dodecahedra.

 \begin{thm}\label{mainthm}
 There are only two knot complements that can be decomposed into regular ideal dodecahedra. Furthermore, these are the only knot complements in their commensurability class.
 \end{thm}

We mention here that our notation for $\Gamma(5,2,2,6,2,3)$ and $\Gamma(5,2,2,3,3,3)$ of $\S$\ref{sect:GrpPresAndReps} differs slightly from that of \cite{NR1}, since they use $\Gamma^+$ to denote an orientation preserving subgroup. Also, the convention for denoting tessellations in \cite{NR1} differs as well. 
 
 \begin{proof}
Using \cite[$\S$9]{NR1}, the dodecahedral knot complements cover the orbifold \\
$\Hth/\Gamma(5,2,2,6,2,3)$. As noted in that section, $\Hth/\Gamma
(5,2,2,6,3,2)$ is the smallest volume orientable orbifold covered by the dodecahedral knot complements. Therefore, by \cite{Margulis1991},
 any orbifold in this commensurability 
class must cover $\Hth/\Gamma(5,2,2,6,2,3)$.  However, there are only two conjugacy classes of knot groups in $\Gamma(5,2,2,6,2,3)$ (see Lem 
\ref{lem:computerArgument}). This completes the proof.
\end{proof} 
 
 As evident from the argument above, the main theorem is a essentially a repackaging of Lemma \ref{lem:computerArgument}, which relies on a computer argument. Consequently, the rest of this paper will be devoted to proving this lemma. To this end, the paper is organized as follows. $\S$\ref{sect:prelim} gives a brief bit of background and establishes notation. $\S$\ref{sect:pReps} provides representations of both the symmetric dodecahedral knot group and commensurator in $\PSLTC$. In addition, presentations for the relevant knot groups are provided as well as a list of the parabolic elements used in the computer argument and two simplifying propositions.  $\S$\ref{sect:CompArgument} provides explanation of the computer argument and summarizes this argument via Lemma \ref{lem:computerArgument}. 
Finally, we note that the computations were implemented in Magma V2.16-6. 

\textbf{Acknowledgments:} The author is grateful to Craig Hodgson, Alan Reid, and Genevieve Walsh for a number of helpful conversations and Sam Ballas for suggestions on an earlier version of the paper. Also, the author would like to thank the University of Texas for providing computer resources and Boston College for providing support for this project. 


\section{Preliminaries}\label{sect:prelim}
\begin{figure}
\includegraphics[height=2.5in]{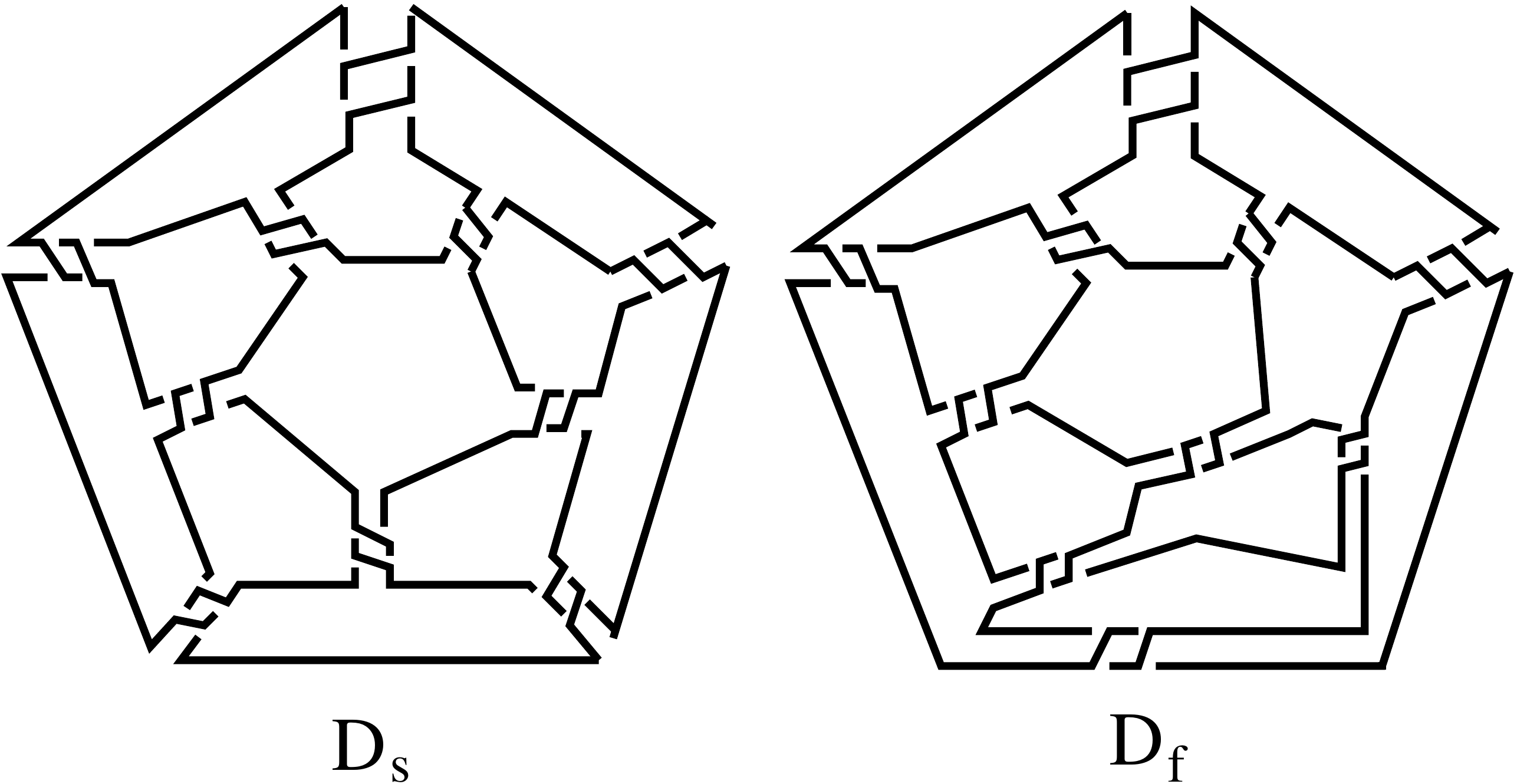}
\caption{\label{fig:dodecahedralknots} The dodecahedral knots of Aitchison and Rubinstein. These knot diagrams have been transcribed from their original figure (see \cite[Fig 9]{AR}).}
\end{figure}

We begin by providing background and establishing some conventions. 
Figure \ref{fig:dodecahedralknots} contains diagrams the two dodecahedral knots as presented in \cite{AR}. Following the notation of Aitchison and Rubinstein, we call these knots $D_s$ and $D_f$ as in the figure. The $D_s$ knot and corresponding knot complement have an orientation preserving symmetry group of $D_5$, while the complement of $D_f$ is fibered and has an orientation preserving symmetry group of order 2. These symmetries can be computed via snappy (see \cite{snappy}). We denote by $\GDS = \pi_1(\Sth-D_s)$ and $\GDF = \pi_1(\Sth-D_f)$. For convenience, we will refer to $\GDS$ and $\GDF$ collectively as the dodecahedral knot groups. Presentations and representations into $\PSLTC$ of these knot groups are provided later in $\S$\ref{sect:GrpPresAndReps}.

We will use the upper half space model of  $\Hth$ and identify $\partial \Hth$ with $\mathbb{C}$. Therefore, we use the natural identification $Isom^+(\Hth)=\PSLTC$.
 We say that the commensurator of a group $\Gamma$ in $\PSLTC$ denoted by $$Comm(\Gamma)=\{ g \in \PSLTC | g\Gamma g^{-1} \cap \Gamma \mbox{ is finite index in $\Gamma$ and $g\Gamma g^{-1}$} \}.$$ We say a group $\Gamma$ admits hidden symmetries if $|Comm(\Gamma):N(\Gamma)|> 1$. 
 
  In the arguments below, we will also consider the non-orientation preserving commensurator, however for our examples, this group will be generated by the commensurator and the element of $Isom(\Hth)$ corresponding to $\zeta \mapsto \bar{\zeta}$ ($\zeta \in \partial \Hth$). 

%
\subsection{Presentations and representations.} \label{sect:GrpPresAndReps}
As noted in \cite{NR1}, the commensurator of either dodecahedral knot complement group is the (orientation preserving) isometry group 
of the tessellation of $\Hth$ by regular ideal dodecahedra. Following the notation of \cite[page 144]{MR} together with the 
aforementioned paper, the commensurator of the dodecahedral knot groups is the tetrahedral group $\Gamma(5,2,2,6,2,3)$ 
which has $\Gamma(5,2,2,3,3,3)$ as an index 2 subgroup.
Again using \cite[page 144]{MR}, we obtain presentations for both of these groups.

$$
\Gamma(5,2,2,6,2,3) = \langle x',y',z' | x'^5,y'^2,z'^2,(y'z'^{-1})^6,(z'x'^{-1})^2,(x'y'^{-1})^3 \rangle$$
$$\Gamma(5,2,2,3,3,3) = \langle x,y,z | x^5,y^2,z^2,(zx^{-1})^3,(xy^{-1})^3,(yz^{-1})^3 \rangle
$$

To compute discrete faithful representations of these groups into $\PSLTC$, we follow the computation outlined in \cite{Lakeland}. Once again, the 
presentation for tetrahedral groups used in that paper differs slightly from the group presentation used above. To avoid confusion, we adapt these methods to be consistent with the presentations above. Also, the representations computed by Lakeland are normalized such that the horoballs tangent to $\infty$ (full sized horoballs in the language of Adams) have Euclidean diameter 1. Instead, our representations are normalized such that the parabolic elements that fix $\infty$ are of the from $\begin{pmatrix} 1 & n+m\omega \\ 0 & 1 \end{pmatrix}$ where $\omega = \frac{1+\sqrt{-3}}{2}$. One may convert between the representations described by Lakeland and the representations below by conjugating by $\beta = \begin{pmatrix} \frac{1}{\sqrt{u}} & 0 \\ 0 & \sqrt{u}  \end{pmatrix}$ where $u=2\cos \frac{\pi}{5}=\frac{1+\sqrt{5}}{2}$. 

Finally, we can see that 
$x'=  \begin{pmatrix} u & -u^{-1} \\ u & 0 \end{pmatrix}$,
$y'=\begin{pmatrix} 0 & \omega\cdot u^{-1} \\ -\omega^{-1}\cdot u & 0 \end{pmatrix}$,
and $z'=\begin{pmatrix} 0 & iu^{-1}  \\ i u & 0 \end{pmatrix}$.

 More importantly for the arguments that follow, we give a representation of $\Gamma(5,2,2,3,3,3)$ corresponding to the 
 above presentation:
 $x= \begin{pmatrix} u & -u^{-1} \\ u & 0 \end{pmatrix}$,
 $y= \begin{pmatrix} 0 & \omega\cdot u^{-1} \\ -\omega^{-1}\cdot u & 0 \end{pmatrix}$, 
 and $z= \begin{pmatrix} 0 & \omega^2\cdot u^{-1}  \\ -\omega^{-2}\cdot u & 0 \end{pmatrix}$.

For the remainder of this argument, we will identify $\Gamma(5,2,2,3,3,3)$ with its image in $\PSLTC$.


\section{Parabolic elements in $\Gamma(5,2,2,3,3,3)$}\label{sect:pReps}


This section introduces parabolic elements analyzed in Lemma \ref{lem:computerArgument} and provides two propositions that will help reduce the number of cases needed to consider in that argument. The parabolic elements are constructed by selecting a finite number of points in $\partial \Hth$ fixed by parabolic 
elements in the representation of $\Gamma(5,2,2,3,3,3)$ (see Figure \ref{fig:parabolicFixedPoints}). 
The choice is selected because it works for the computer assisted argument that follows. Since we are considering parabolic fixed points, the choice of these points is in no way unique. (For example, a different set could be generated by conjugating our representation of $\Gamma(5,2,2,3,3,3)$ by any element of $\PSLTC$ and looking at those fixed points.)
Although different sets of parabolic elements were tried, we make no claim that this set is the most efficient in terms of fewest cases needed to compute. 


We now turn our attention to laying the foundation for the computer assisted computation 
that we will produce in the next section. As with any computer assisted computation, 
there are two (often competing) goals. First is to provide a computation that is thorough, accurate, and comprehensible, followed by an inherent desire to minimize 
size of the computation. However, this all relies on being able to reduce our problem to checking a finite number of cases. The following proposition establishes the 
fact that at a particular parabolic fixed point, there are only finitely many possible meridians to consider.

\begin{prop}\label{prop:meridianForm}
If $\GK$ is a knot group commensurable with $\Gamma(5,2,2,3,3,3)$, then the meridians of $\GK$ are of the form $g_p m_\infty g_p^{-1}$
where $g_p \in \Gamma(5,2,2,3,3,3)$ such that $g_p: p \mapsto \infty$ and $m_\infty =\begin{pmatrix} 1 & \pm \omega^i \\ 0 & 1 \end{pmatrix}$ such that $i$ an 
integer.
\end{prop}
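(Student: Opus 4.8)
The plan is to use commensurability to move the cusp of $\GK$ onto $\infty$ inside the fixed group $\Gamma(5,2,2,6,2,3)$, which reduces the meridian to a translation in an explicit lattice, and then to force that translation to have minimal length by a maximal‑cusp estimate. Since $\Gamma(5,2,2,3,3,3)$ is non‑arithmetic, Margulis' theorem makes $Comm(\Gamma(5,2,2,3,3,3))=\Gamma(5,2,2,6,2,3)$ discrete and the unique maximal element of the commensurability class, so after conjugation I may assume $\GK\leq\Gamma(5,2,2,6,2,3)$. Let $m\in\GK$ be a meridian with parabolic fixed point $p$. Commensurable Kleinian groups have the same parabolic fixed points, so $p$ is a parabolic fixed point of $\Gamma(5,2,2,3,3,3)$; and since the one‑cusped dodecahedral knot complements cover $\Hth/\Gamma(5,2,2,3,3,3)$, that orbifold is itself one‑cusped, so $\Gamma(5,2,2,3,3,3)$ acts transitively on its parabolic fixed points. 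Hence there is $g_p\in\Gamma(5,2,2,3,3,3)$ carrying $\infty$ to $p$, so that $m_\infty:=g_p^{-1}mg_p$ fixes $\infty$ and $m=g_pm_\infty g_p^{-1}$ has the asserted form.

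Because $g_p\in\Gamma(5,2,2,3,3,3)\leq\Gamma(5,2,2,6,2,3)$, the element $m_\infty$ lies in $\Gamma(5,2,2,6,2,3)$ and fixes $\infty$, so $m_\infty=\begin{pmatrix}1 & t \\ 0 & 1\end{pmatrix}$ for some $t$ in the translation lattice $\Lambda$ of $\Gamma(5,2,2,6,2,3)$ at $\infty$. The cusp cross‑section of $\Hth/\Gamma(5,2,2,6,2,3)$ is a Euclidean $(2,3,6)$ triangle orbifold and that of $\Hth/\Gamma(5,2,2,3,3,3)$ a $(3,3,3)$ one; both carry the same underlying translation sublattice, which by the chosen normalization is $\mathbb{Z}[\omega]$ (the index‑two extension is realized by the elliptic element distinguishing $z'$ from $z$, which adds rotations of the cusp but no new translations, and one verifies $\Lambda=\mathbb{Z}[\omega]$ directly from the representations). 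The units of $\mathbb{Z}[\omega]$ are exactly the sixth roots of unity $\pm\omega^i$, so it remains only to prove that $t$ is a unit.

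This last step is the crux, and I would prove it geometrically. Let $N=\Hth/\GK$ be the knot complement. Filling $N$ along the meridian produces $\Sth$, whose fundamental group is trivial, so by the $6$‑theorem (slopes of length greater than $6$ yield manifolds with infinite, word‑hyperbolic‑type $\pi_1$) the meridian has length at most $6$ on the maximal cusp of $N$. Next, the maximal cusp of the covering $N\to\Hth/\Gamma(5,2,2,6,2,3)$ reaches at least as low as the pullback of the maximal cusp of the base, so its boundary horosphere sits at Euclidean height $h_0\leq h_0'$, where $h_0'$ is the corresponding height for $\Gamma(5,2,2,6,2,3)$. On the horosphere at height $h$ a translation by $s\in\mathbb{C}$ has length $|s|/h$, and in $\mathbb{Z}[\omega]$ every non‑unit has modulus at least $\sqrt{3}$ (the next shortest vectors are $1+\omega$ and its associates), a factor of $\sqrt{3}$ above the units. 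Thus if $h_0'<\sqrt{3}/6$, a non‑unit meridian would have length at least $\sqrt{3}/h_0\geq\sqrt{3}/h_0'>6$, contradicting the $6$‑theorem; hence $t$ is a unit and $m_\infty=\begin{pmatrix}1 & \pm\omega^i \\ 0 & 1\end{pmatrix}$. The main obstacle is precisely the inequality $h_0'<\sqrt{3}/6$: one must compute, or carefully bound, the maximal cusp of the minimal orbifold $\Hth/\Gamma(5,2,2,6,2,3)$ in the given normalization — locating the horoball nearest the one at $\infty$ — and check that this definite gap between the shortest and next‑shortest lattice translations straddles the $6$‑theorem bound. Showing that the meridian is forced to be a \emph{shortest} translation, rather than merely a bounded one, is where the real work lies.
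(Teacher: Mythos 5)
Your reduction to the form $g_p m_\infty g_p^{-1}$ with $m_\infty$ a translation in the lattice $\mathbb{Z}[\omega]$ is sound and agrees with the paper's setup (the paper gets $\GK\subset\Gamma(5,2,2,3,3,3)$ from the fact that a knot group is generated by parabolics, and reads off $P_\infty=\left\{\begin{pmatrix}1 & n+m\omega\\ 0 & 1\end{pmatrix}\right\}$ from the explicit representation). The problem is the crux step, which you correctly flagged as the real work: the inequality $h_0'<\sqrt{3}/6$ that your $6$-theorem argument needs is \emph{false}. In the paper's normalization the maximal cusp of $\Hth/\Gamma(5,2,2,6,2,3)$ sits at Euclidean height $h_0'=1/u=(\sqrt{5}-1)/2\approx 0.618$ (this is immediate from the stated relation to Lakeland's normalization, where full-sized horoballs have diameter $1$, after rescaling by $\beta$; equivalently, $y$ has lower-left entry of modulus $u$, giving tangency at height $1/u$). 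So a non-unit translation of modulus $\sqrt{3}$ has length $\sqrt{3}\,u\approx 2.80$ on the maximal cusp, and the translation by $2$ has length $2u\approx 3.24$ --- both comfortably below $6$. The $6$-theorem therefore cannot force the meridian to be a shortest lattice vector, and no sharpening of the cusp estimate will rescue this, since the relevant heights are exact, not bounds.

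The paper closes this gap by a completely different, algebraic mechanism (following Reid--Walsh, Lemma 3.6): a knot group is \emph{normally generated} by a meridian, and $\GK$ contains an element $g=\begin{pmatrix}0 & b\\ c& d\end{pmatrix}$ carrying $\infty$ to $0$, whose determinant condition forces $b,c$ to be units. If the off-diagonal entry of $m_\infty$ were a non-unit, reduction of $\Gamma(5,2,2,3,3,3)$ modulo a prime dividing it would kill $m_\infty$, hence kill all of its conjugates and so all of $\GK$, while $g$ survives as a nontrivial element --- a contradiction. You would need to replace your geometric step with an argument of this kind (or some other mechanism exploiting normal generation); as written, the proposal has a genuine gap at exactly the point where the conclusion is supposed to be forced.
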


\begin{proof}
Given the representation in $\S$\ref{sect:GrpPresAndReps}, the peripheral subgroup $P_\infty$ fixing $\infty$ is of the form: 
$\left\{ \left. \begin{pmatrix} 1 & n+m\omega \\ 0 & 1 \end{pmatrix}  \right| n,m \in \mathbb{Z} \right\}$. 
Also, there is a peripheral subgroup $P_0=y P_\infty y^{-1}$ fixing zero. Next, we again note that since $\GK$ is generated by parabolics, 
$\GK\subset \Gamma(5,2,2,6,2,3)$ and $\Gamma(5,2,2,3,3,3)$. Therefore any knot group, $\GK \subset \Gamma(5,2,2,3,3,3)$ has meridians 
fixing $0$ and $\infty$ ($m_0$ and $m_\infty$ 
respectively) and an element $g=\begin{pmatrix} a & b \\ c & d \end{pmatrix}$ that conjugates meridians fixing $\infty$ to meridians fixing $0$. Consideration of the 
(1,1) entry of $g$ shows $a=0$.  Following an identical argument to 
\cite[Lem 3.6]{RW}, we see that the meridians in $P_\infty$ and $P_0$ must have units in their off diagonal entries since $\GK$ is normally generated 
by a meridian and $g$ is non-trivial under any reduction homomorphism. This completes the proof.
\end{proof}

Next, we use snappy as a base for computations to obtain an unsimplified presentation for $\GDS$, \\

$\GDS=\langle a,b,c,d,e,f,g,h,i,j,k,l,m,n,o,p,q,r,s,t,u,v,w,x|b*v^{-1}*g^{-1}*o^{-1}*w, p*b^{-1}*c^{-1}, v*p*v^{-1}*u^{-1}*w*c^{-1}, r^{-1}*q*b^{-1}, k*t^{-1}*r, \
t*f*s^{-1}, a^{-1}*m*t*q^{-1}*k^{-1}, m*s*r^{-1}*k^{-1}, o*m^{-1}*a*w^{-1}, e*a^{-1}*t, e*m^{-1}*f^{-1}, e*o^{-1}*a^-\
1, l*u*c, j^{-1}*l^{-1}*s^{-1}*t, f*l^{-1}*w^{-1}*f^{-1}*u^{-1}, f*j*p*j^{-1}, q^{-1}*g*n^{-1}*x^{-1}, i^{-1}*h*x^{-1}*\
d^{-1}, h*n*v^{-1}*n^{-1}, i*g*x, u^{-1}*o*i^{-1}, d*o*d^{-1}*q, d*h^{-1}*o \rangle$.\\

We use the unsimplified presentation in order to control the simplification ourselves and find a presentation that is generated by parabolics. With some scratch work and use of Magma, we see this presentation could have a more efficient generating set. To that point, $a,f,h,o,$ and $u$ form a parabolic generating set for $\GDS$. These elements where used to make the choices for the first five parabolics later in this section. 

For the sake of completeness, we provide a presentation for the knot group $\GDF$: \\

$\GDF=\langle a,b,c,d,e,f,g,h,i,j,k,l,m,n,o,p,q,r,s,t,u,v,w|a^{-1}*v*a*h^{-1}, n*a^{-1}*w*h^{-1}, o*h^{-1}*d, o^{-1}*n*d^{-1}, w*c*h^{-1}*q^{-1}, i*c^{-1}*q*h*w^{-1}, i*v*c^{-1}, l^{-1}*w^{-1}*v*i, b*n^{-1}*c, g*l^{-1}*d^{-1}, s*g*e^{-1}*i^{-1}*a, s*b*l*e^{-1}*l^{-1}, p*r^{-1}*i^{-1}*v^{-1}, p^{-1}*r*e, f*r*q^{-1}, j^{-1}*f*g^{-1}, t^{-1}*j*s^{-1}*b^{-1}*k, q^{-1}*m^{-1}*j*k, t*m^{-1}*p^{-1}*q*e*f^{-1}, t*q*t^{-1}*p^{-1}, u*k^{-1}*h, m*u^{-1}*c^{-1}*o*u\rangle$.\\

In order to think of $\GDS$ as a subgroup of $\Gamma(5,2,2,3,3,3)$ in $\PSLTC$, we explicitly give the generators $a,f,h,o,u$ as words in $\Gamma(5,2,2,3,3,3)$.

$a = x*y^{-1}*z*y^{-1}$

$f= z*x*y^{-1}*z*y^{-1}*z$

$h= x*y^{-1}*x*y*x^{-1}*y^{-1}*x*y^{-1}*z*x*y^{-1}*x^{-1}*y*x^{-1}$

$o= x*y^{-1}*x^2*y^{-1}*z*y^{-1}*x^{-1}*y*x^{-1}$

$u= y*x*y^{-1}*z*x*y^{-1}*z*y^{-1}*z^{-1}*y^{-1}*x^{-1}*y^{-1}$ 

This computation will be used later to distinguish conjugacy classes of knot groups in $\Gamma(5,2,2,3,3,3)$ and $\Gamma(5,2,2,6,2,3)$.

We now describe a method for generating $3^{11}$  parabolic elements in $\Gamma(5,2,2,3,3,3)$ and $\Gamma(5,2,2,6,2,3)$. First, we note that for any discrete faithful representation of $\Gamma(5,2,2,6,2,3)$ 
into $\PSLTC$, any parabolic element of $\Gamma(5,2,2,6,2,3)$ is also a parabolic element of $\Gamma(5,2,2,3,3,3)$. Therefore any 
subgroup of $\Gamma(5,2,2,6,2,3)$ generated by parabolic elements will also be a subgroup of $\Gamma(5,2,2,3,3,3)$. In the 
upcoming search for subgroups of the commensurator that are knot groups, we only have to show that such groups are finite index in $\Gamma(5,2,2,3,3,3)$. 

It is often convenient to consider a knot group as the normal closure of a meridian. However, for computations when the knot group is unknown 
this method becomes quite difficult to exploit. We get around this by observing in order to generate a knot group, we only need a finite number of elements conjugate to the meridian in the knot group. This conjugation would also exist in the commensurator $\Gamma(5,2,2,6,2,3)$ and $\Gamma(5,2,2,3,3,3)$. Hence, there are only six possible conjugates  of $\begin{pmatrix} 1 & 1 \\  0 & 1 \end{pmatrix}$ at a given fixed point and since we are interested in groups generated by parabolic elements we only have to consider these parabolic up to to inverses. A bit of care should be exercised here to avoid confusion. Since $\Hth/\Gamma(5,2,2,3,3,3)$ has one cusp, given two parabolics $p_1$ and $p_2$ conjugate to $\begin{pmatrix} 1 & 1 \\  0 & 1 \end{pmatrix}$ in $\Gamma(5,2,2,6,2,3)$ and fixing points $r_1$ and $r_2$, respectively, there is an element $g$ in $\Gamma(5,2,2,3,3,3)$ with $g(r_2)=r_1$ such that either $p_1=g p_2 g^{-1}$ or $p_1=g p_2^{-1} g^{-1}$.  Therefore, after fixing a finite of parabolic fixed points, we can list all conjugates of a meridian fixing $\infty$ in $\Gamma(5,2,2,3,3,3)$ and consideration of only the groups generated by finite sets of these conjugates in $\Gamma(5,2,2,3,3,3)$
will result in the same set of groups as consideration of conjugates  in $\Gamma(5,2,2,6,2,3)$.

First, we provide the three possible meridians fixing $\infty$.
We will also provide the identification with the representation of $\Gamma(5,2,2,3,3,3)$ as listed in $\S$\ref{sect:GrpPresAndReps}. 

$m_{\infty,0}=x*y^{-1}*z*y^{-1}=\begin{pmatrix} 1 & 1 \\  0 & 1 \end{pmatrix}$

$m_{\infty,1}=z*x^{-1}*z*y^{-1}=\begin{pmatrix} 1 & \omega \\  0 & 1 \end{pmatrix}$

$m_{\infty,2}=z*x^{-1}*y*x^{-1}=\begin{pmatrix} 1 & \omega^{2} \\  0 & 1 \end{pmatrix}$

Using these meridians, we can refine the explanation above.  As noted in Proposition \ref{prop:meridianForm}, all meridians of knot groups (up to inverses) are of the form $g_p m_{\infty,0} g_p^{-1}$, $g_p m_{\infty,1} g_p^{-1}$, and $g_p m_{\infty,2} g_p^{-1}$, where $g_p:p\mapsto \infty$. This is really just a way to conjugate $\begin{pmatrix} 1 & 1 \\  0 & 1 \end{pmatrix}$ by $g_p$, $g_p\cdot r$, $g_p \cdot r^2$ where $r$ is a rotation of order 3 fixing $\infty$.  However, by first rotating an then conjugating, it makes creating an exhaustive list of possible meridians at parabolic fixed point much more feasible. 
Thus, we provide a relevant set of $g_p$'s below, which we will use in the proof of the Lemma \ref{lem:computerArgument}.

Next, consider $g_{p_1} = y$, then $p_1=0$ and there are three meridians fixing 0 we should consider are: $m_{p_1,0}=y*m_{\infty,0}*y^{-1}$, $m_{p_1,1}=y*m_{\infty,1}*y^{-1}$,
and $m_{p_1,2}=y*m_{\infty,2}*y^{-1}$. (Note that $m_{\infty,0}*y*m_{\infty,2}*y^{-1}$ is an element of order 5). In general, we will use $m_{p,j} = g_p m_{\infty,i} g_p^{-1}$ for all $p\ne \infty$.

Figure \ref{fig:parabolicFixedPoints} defines the remaining nine $g_p$'s used in the proof. The first column is the index i, second column provides $g_{p_i}$ and the third column provides $p_i$. 

\begin{center}
\begin{figure}
\begin{tabular}{|c|c|c|}
\hline
\hline
$i$ & $g_{p_i}$ & $p_i$\\
\hline
1 & $y$ & $0$\\
2 & $x*y*x^{2}*y*x$ & $-\frac{\omega}{u^2}$\\
3 & $m_{\infty,2}^{-1}*y$ & $\omega^{-1}$\\
4 & $y*m_{\infty,0}*y$ & $\frac{1}{\omega(1+u)}$\\
5 & $m_{\infty,0}*m_{0,0}$ & $1+\frac{1}{\omega(1+u)}$\\
6 & $m_{p_3,1}$ & $\frac{\omega}{u}-1+\frac{2}{\omega}$\\
7 & $m_{p_1,2}$ & $\frac{-1}{u^2}$\\
8 & $m_{\infty,1}*m_{1,0}$ & $\frac{w}{u}+\frac{1}{u^2}$\\
9 & $m_{\infty,1}*y$ & $\omega$\\
10 & $x$ & $1$\\
\hline
\end{tabular}
\caption{\label{fig:parabolicFixedPoints} Together with $\infty$ and $0$, the parabolic fixed points in this table are the only ones used in the computer argument that follows.}
\end{figure}
\end{center}

In light of Proposition \ref{prop:meridianForm}, there are 9 possible pairs of meridians (up to inverses) comprised of a parabolic fixing 0 and parabolic fixing $\infty$.
 We provide the following argument in order to whittle down the possible sets we need to consider.

\begin{prop}\label{prop:ZeroAndInfinity}
In $\Gamma(5,2,2,3,3,3)$, any knot group is conjugate to group with elements:
 $\begin{pmatrix} 1 & 1 \\ 0 & 1 \end{pmatrix}$ and $\begin{pmatrix} 1 & 0\\ -u^2\cdot \omega^2 & 1 \end{pmatrix}$.
\end{prop}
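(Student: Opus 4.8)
The plan is to normalize the two meridians one at a time, using the order-three rotational symmetries of the cusp, and to isolate the genuinely delicate point as the coupling between the two normalizations. First I would seat a meridian at $\infty$. Since $\GK$ has finite index in $\Gamma(5,2,2,3,3,3)$ and the latter has a single cusp, every parabolic fixed point of $\GK$ is $\Gamma(5,2,2,3,3,3)$-equivalent to $\infty$; conjugating by a suitable element I may assume $\infty$ is a cusp of $\GK$, and by Proposition \ref{prop:meridianForm} its meridian is $m_{\infty,i}=\begin{pmatrix}1 & \pm\omega^i\\0 & 1\end{pmatrix}$ for some $i$. The element $zx^{-1}$ has order $3$ by the relation $(zx^{-1})^{3}$, fixes $\infty$, and realizes the three-fold rotational symmetry of the $(3,3,3)$ cusp; a direct computation shows conjugation by it sends $m_{\infty,0}$ to $m_{\infty,1}^{-1}$ and so cyclically permutes the three meridian directions up to inverse. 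After conjugating by a power of $zx^{-1}$ I may therefore assume the meridian at $\infty$ is exactly $m_{\infty,0}=\begin{pmatrix}1 & 1\\0 & 1\end{pmatrix}$.

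Next I would produce and normalize a meridian at $0$. As $y$ sends $\infty$ to $0$, the point $0$ is also a cusp of $\GK$, and Proposition \ref{prop:meridianForm} with $g_p=y$ shows its meridian is $m_{0,j}^{\pm1}$, where $m_{0,j}=y\,m_{\infty,j}\,y^{-1}=\begin{pmatrix}1 & 0\\ \omega^{\,j+1}u^{2} & 1\end{pmatrix}$; in particular, since $-u^{2}\omega^{2}=\omega^{5}u^{2}$, the target element $\begin{pmatrix}1 & 0\\ -u^{2}\omega^{2} & 1\end{pmatrix}$ is exactly $m_{0,1}^{-1}$. The element $d=yz=\begin{pmatrix}\omega^{-1} & 0\\0 & \omega\end{pmatrix}$ lies in $\Gamma(5,2,2,3,3,3)$, has order $3$, and fixes both $0$ and $\infty$; tracking the lower-left entry, conjugation by $d$ cyclically permutes the three meridians at $0$. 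Together with the inversion $y$, which interchanges $0$ and $\infty$, this is intended to reduce the pair to the single normal form $(m_{\infty,0},\,m_{0,1}^{-1})$.

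The hard part, which I expect to be the main obstacle, is the coupling between these two normalizations: no element of $\Gamma(5,2,2,3,3,3)$ — indeed of the full commensurator — can fix the meridian $m_{\infty,0}$ while moving the meridian at $0$. The stabilizer of the pair $\{0,\infty\}$ consists of diagonal and anti-diagonal matrices, and conjugation by any of these scales the upper-right entry at $\infty$ and the lower-left entry at $0$ by reciprocal units; writing the two directions as $\omega^{k_\infty}$ and $\omega^{k_0}u^{2}$, the residue $k_\infty+k_0\pmod 3$ is therefore invariant, and $d$ only permutes configurations within a fixed residue class. Since the target pair lies in the class $k_\infty+k_0\equiv 2$, the reduction can succeed only if every knot group realizes this residue, so the real content is to rule out the other two classes. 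I would attack this by exploiting that $\GK$ is torsion-free and normally generated by one meridian: rather than insisting on the cusps $\infty$ and $0$, I would look among the infinitely many cusps of $\GK$ for a pair whose meridians have relative residue $2$ and conjugate that pair, by an element of the commensurator, onto $(\infty,0)$ with meridians $(m_{\infty,0},m_{0,1}^{-1})$. Verifying that such a pair always exists — equivalently, that the cusp-identifying elements of $\GK$ cannot all carry the same wrong rotational part — is the delicate point on which the proposition rests, and once it is settled the order-$3$ symmetry $d$ completes the reduction.
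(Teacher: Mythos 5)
Your normalization steps and your identification of the obstruction are sound: after seating $m_{\infty,0}=\begin{pmatrix}1&1\\0&1\end{pmatrix}$, the orientation-preserving stabilizer of $\{0,\infty\}$ in the commensurator only permutes the pairs of meridian directions within a fixed residue class of $k_\infty+k_0\pmod 3$, so a further argument is needed to land in the class containing the target pair. But your proposal stops exactly at that point: you name the existence of a cusp pair of $\GK$ with the right relative residue as ``the delicate point on which the proposition rests'' and offer no argument for it. That is the entire content of the proposition, so as written this is a genuine gap, not a deferred technicality. Moreover, the reduction to a \emph{single} residue class by orientation-preserving conjugation alone is not available: the paper only eliminates one of the three classes outright and then appeals to the anti-holomorphic element $\zeta\mapsto\bar\zeta$, which fixes $m_{\infty,0}$ but sends $j\mapsto -j$ at $0$; this orientation-reversing symmetry normalizes $\Gamma(5,2,2,3,3,3)$ in $Isom(\Hth)$ and is what merges the remaining two classes. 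Your assertion that no element of ``the full commensurator'' can fix $m_{\infty,0}$ while moving the meridian at $0$ is therefore false once orientation-reversing elements are admitted, and it is precisely this element you are missing.

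The paper's mechanism for killing the bad class is a one-line computation you do not consider: take the \emph{product} of the two candidate meridians. If the meridian at $0$ were $\begin{pmatrix}1&0\\-u^2&1\end{pmatrix}$ (the excluded residue class), then
$$tr\left(\begin{pmatrix}1&1\\0&1\end{pmatrix}\begin{pmatrix}1&0\\-u^2&1\end{pmatrix}\right)=2-u^2=\tfrac{-1}{u}=2\cos\tfrac{3\pi}{5},$$
so the product is elliptic of order $5$ and cannot lie in a torsion-free knot group. This, together with the complex-conjugation symmetry, is what closes the case analysis; searching among the other cusps of $\GK$ for a better-behaved pair is neither needed nor obviously feasible. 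I would suggest replacing your final paragraph with this trace/torsion argument and an explicit invocation of the orientation-reversing normalizer.
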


\begin{proof}
As seen in the previous proof, $m_\infty =\begin{pmatrix} 1 & \pm \omega^i \\ 0 & 1 \end{pmatrix}$ and $m_0=\begin{pmatrix} 1 & 0 \\ \mp\omega^j u^2 & 1 \end{pmatrix}$ with $-2\leq i,j \leq 3$. 

After conjugating $\GK$ by a sufficient power of $yz^{-1}$, we may assume $\begin{pmatrix} 1 & 1 \\ 0 & 1 \end{pmatrix}\in \GK$ and $m_0 = \begin{pmatrix} 1 & 0 \\ \mp \omega^j u^2 & 1 \end{pmatrix}$ with $j=\pm 1,0$. 

However, $tr \begin{pmatrix} 1 & 1 \\  0 & 1 \end{pmatrix}\begin{pmatrix} 1 & 0 \\  -u^2 & 1 \end{pmatrix} = 2-u^2=\frac{-1}{u}=2\cdot \cos(\frac{3\pi}{5})$
 and therefore is an elliptic element of order 5. Thus, $j=\pm 1$ and using the element in the full commensurator that is realized by complex conjugation on the entries of elements, we may assume $j=1$.

Finally, This element acting by complex conjugation is normalizes $\Gamma(5,2,2,3,3,3)$ in $Isom(\Hth)$,  so it does not affect the conjugacy classes of the knot group.
\end{proof}

\section{Computer verification}\label{sect:CompArgument}
This section outlines the computation done in Magma for the main argument. Ultimately, it exhausts all possible sets of meridians fixing the chosen set of fixed points 
(see $\S$\ref{sect:pReps}) and shows that the groups they generate are all of finite index in the commensurator of the dodecahedral knot complements. Moreover,
the indices that result from this computation allow us to classify all of the subgroups of $\Gamma(5,2,2,3,3,3)$ and $\Gamma(5,2,2,6,2,3)$ up to conjugacy.

We will see the main argument can verified by repeatedly using only a few of Magma's standard functions. 
(An executable file in magma that performs the necessary computations is available on the author's website.)
First, $\Gamma(5,2,2,3,3,3)$ is entered as a finitely presented group or \emph{FPGroup} in the parlance of Magma. Next, we generate groups using sets of possible 
meridians as given by Figure \ref{fig:parabolicFixedPoints}. For each group generated by candidate meridians, we check the index of the group in $\Gamma(5,2,2,3,3,3)$ using the \emph{Index(FPGroup, FPGroup)} function of Magma. In the cases where the index comes out to be greater than 1, we use the \emph{quo$\langle$ FPGroup $|$ relation $\rangle$} to obtain the quotient of our group by the normal closure of a meridian and \emph{Order(FPGroup)} to verify that such a 
quotient group is trivial. By Perelman's positive solution to the Poincare conjecture (see \cite{MT}), this condition is equivalent to being a knot group. When a 
knot group is discovered, we use the \emph{IsCongugate(FPGroup, FPGroup, FPGroup)} to check to see if the second and third arguments are conjugate in the group 
given by the first argument. (An explicit example of this is provided in the paragraph above the statement of Lemma \ref{lem:computerArgument}.) This process establishes the conjugacy classes in $\Gamma(5,2,2,3,3,3)$. The final step is to input $\Gamma(5,2,2,6,2,3)$ obtain $\Gamma(5,2,2,3,3,3)$ as an index 2 subgroup and consider conjugacy classes of knot groups in $\Gamma(5,2,2,6,2,3)$. 

Proposition \ref{prop:ZeroAndInfinity} shows we only need to consider generating sets that include $m_{\infty,0}$ and $m_{p_1,1}$. A priori, there would be $3^9$ other generating sets to consider. However, in reality many fewer cases are needed. Figures \ref{fig:TableFirstPart} and \ref{fig:TableSecondPart} give tables of the cases need for a complete argument.

The data is encoded as follows: under columns labeled by $\infty$, 1-10, we record the index of the meridian used in the generating set. The column marked index records the index of the group generated in $\Gamma(5,2,2,3,3,3)$. In  most cases, it is either 1 or 60 with 60 indicating (after computing the normal closure of a meridian) that the group is a knot group. In one case, we use the symbol $T$ to indicated that the group generated is not shown to be finite index, but a simple computation of $tr(m_{p_1,1} \cdot m_{p_4,1})$ shows the group has torsion of order 5. In that case, the group generated can not be a knot group nor the subgroup of a knot group. 

As mentioned above, the computer argument chooses a finite set of fixed points and shows that any collection of them generates a finite index subgroup of $\Gamma(5,2,2,3,3,3)$. For example, the first line of Figure \ref{fig:TableSecondPart} records that $\langle m_{\infty,0}, m_{p_1,1}, m_{p_2,1}, m_{p_3,2} \rangle$ generates a group of index 1 in $\Gamma(5,2,2,3,3,3)$. Hence, in this case, we do not have to consider adding elements $m_{p_4,j}$.
Furthermore, by consideration of the last two lines of Figure \ref{fig:TableFirstPart} and this first line of Figure \ref{fig:TableSecondPart}, we know that any subgroup of $\Gamma(5,2,2,3,3,3)$ with meridian candidates at each parabolic fixed point including the set of elements $\langle m_{\infty,0}, m_{p_1,1}, m_{p_2,1} \rangle$ can not be a knot group because any element of form $m_{p_3,j}$ will force such a group to be index 1 in $\Gamma(5,2,2,3,3,3)$. In this manner, we check that any knot group, which must have a meridian at each parabolic fixed point, is index 60 in $\Gamma(5,2,2,3,3,3)$ and 36 of the 45 possible generating sets of meridians we consider actually generate all of $\Gamma(5,2,2,3,3,3)$.

Finally, we explain the last column of the table. There are three conjugacy classes of knot groups in $\Gamma(5,2,2,3,3,3)$ and two in $\Gamma(5,2,2,6,2,3)$. The three conjugacy classes in the first group are marked by $I$, $IIA$, and $IIB$ with 
$IIA$ and $IIB$ being the same conjugacy class in 
$\Gamma(5,2,2,6,2,3)$. The conjugacy class $I$ contains groups isomorphic to $\GDS$ and $IIA$ or $IIB$  contains groups isomorphic to $\GDF$. For example, if we denote by 
$$H_1=\langle m_{\infty,0}, m_{1,1}, m_{2,0}, m_{3,0}, m_{4,2}, m_{5,0}, m_{6,0}, m_{7,2} \rangle$$ 
and $$H_2=\langle m_{\infty,0}, m_{1,1}, m_{2,0}, m_{3,0}, m_{4,2}, m_{5,0}, m_{6,1} \rangle,$$ then we see that 
\emph{IsCongute($\Gamma(5,2,2,3,3,3)$,$H_1$,$H_2$)} returns false, while\\ \emph{IsCongute($\Gamma(5,2,2,6,2,3)$,$H_1$,$H_2$)} returns true. The latter result indicates that these groups are in fact isomorphic and the corresponding knot complements are homeomorphic.  

We summarize the above explanation in the following lemma.

\begin{lem}\label{lem:computerArgument}
\begin{enumerate}
\item Up to conjugation in $\Gamma(5,2,2,3,3,3)$, there are three subgroups that are knot groups. Furthermore, these groups are index 60 in $\Gamma(5,2,2,3,3,3)$.

\item Up to conjugation in $\Gamma(5,2,2,6,2,3)$, there are two subgroups that are knot groups. Furthermore, these groups are index 120 in $\Gamma(5,2,2,6,2,3)$.
\end{enumerate}
\end{lem}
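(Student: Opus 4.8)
The plan is to convert the lemma into a finite search over candidate generating sets built from parabolic elements, and then to carry out that search by coset enumeration. By Proposition \ref{prop:meridianForm} every meridian of a knot group $\GK \subset \Gamma(5,2,2,3,3,3)$ has the form $g_p m_{\infty,i} g_p^{-1}$, and by Proposition \ref{prop:ZeroAndInfinity} I may conjugate $\GK$ so that it contains $m_{\infty,0}=\begin{pmatrix} 1 & 1 \\ 0 & 1 \end{pmatrix}$ and $m_{p_1,1}$. Since $\GK$ is normally generated by a single meridian and is generated by parabolics, it must contain a meridian fixing each of the parabolic points $\infty, p_1, \dots, p_{10}$ of Figure \ref{fig:parabolicFixedPoints}; at each such point there are, up to inverses, exactly three choices $m_{p_i,0}, m_{p_i,1}, m_{p_i,2}$. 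Fixing the two normalized meridians at $\infty$ and $p_1$ leaves at most $3^9$ generating sets to examine, each a finite word-set in the fixed presentation of $\Gamma(5,2,2,3,3,3)$.

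Next I would carry out the enumeration inside Magma. For each candidate generating set I compute the index in $\Gamma(5,2,2,3,3,3)$ of the subgroup $H$ it generates by Todd--Coxeter coset enumeration. The search is pruned using the fact that index is non-increasing when generators are added, together with the requirement that a knot group carry a meridian at every chosen fixed point: as soon as appending the mandatory meridian at the next unfilled point forces index $1$ for all three admissible choices, the whole branch generates the full tetrahedral group --- which has torsion and so is no knot group --- and is discarded. A few further partial sets are eliminated by a torsion obstruction instead; for example $\mathrm{tr}(m_{p_1,1}\cdot m_{p_4,1})$ reveals an element of order $5$, and no knot group (nor subgroup of one) can contain torsion. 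The candidates that survive all have index $60$.

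For each index-$60$ subgroup $H$ I would then verify that it is genuinely a knot group: I form the quotient of $H$ by the normal closure of one of its meridians and check with \emph{Order} that this quotient is trivial. Since $\Hth/H$ is a one-cusped finite-volume hyperbolic manifold, triviality of this Dehn-filling quotient means the meridian filling is a simply connected closed $3$-manifold, hence $\Sth$ by Perelman's solution of the Poincar\'e conjecture (\cite{MT}); thus $\Hth/H$ is a knot complement. To sort these knot groups into conjugacy classes I would apply \emph{IsConjugate} in $\Gamma(5,2,2,3,3,3)$, which yields exactly the three classes $I$, $IIA$, $IIB$; comparison against the explicit parabolic generating set $a,f,h,o,u$ of $\GDS$ identifies class $I$ with $\GDS$ and the remaining two with $\GDF$. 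Finally, entering $\Gamma(5,2,2,6,2,3)$ with $\Gamma(5,2,2,3,3,3)$ as an index-$2$ subgroup multiplies each index by $2$ (giving $120$) and, rerunning \emph{IsConjugate} in the larger group, merges $IIA$ and $IIB$ into a single class while $I$ remains separate, establishing the two classes of part (2).

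The main obstacle I anticipate is not any single computation but the completeness of the reduction: one must be certain that every knot group in the commensurability class really is conjugate to one generated by meridians drawn from this particular finite list of fixed points, so that the $3^9$ cases are genuinely exhaustive rather than merely a sample. This rests on Propositions \ref{prop:meridianForm} and \ref{prop:ZeroAndInfinity} together with the fact that a knot group must carry a meridian at each of the chosen cusp lifts. A secondary practical concern is the reliability of coset enumeration, since Todd--Coxeter need not terminate for an arbitrary finite presentation; here termination is unproblematic in practice because all relevant subgroups turn out to have small index ($1$ or $60$), and the torsion shortcut dispatches the one case where finiteness of index is not directly established.
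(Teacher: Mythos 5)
Your proposal follows essentially the same route as the paper: normalize via Propositions \ref{prop:meridianForm} and \ref{prop:ZeroAndInfinity}, exhaustively enumerate the (at most $3^9$) candidate meridian sets over the fixed points of Figure \ref{fig:parabolicFixedPoints} with pruning once a partial set generates all of $\Gamma(5,2,2,3,3,3)$, dispose of the one remaining case by the order-$5$ trace obstruction, certify the index-$60$ survivors as knot groups via the trivial meridian-quotient and Perelman, and sort them with \emph{IsConjugate} in both tetrahedral groups. This matches the paper's computation in structure and in every substantive detail, so I have nothing to add.
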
  

Finally, results of Magma computations come in two flavors. 
Either Magma has enough information to answer a question or the computation requires too much memory and 
Magma returns an indefinite answer. Given these possibilities, we remark that Magma is more reliable dealing with computations in groups of lower index. Without 
recognizing this, it would seem natural to consider all knot groups as subgroups of $\Gamma(5,2,2,6,2,3)$. This approach seems especially appealing in light of the 
fact that there are two conjugacy classes of $\GDF$ in  $\Gamma(5,2,2,6,2,3)$. 
However, the \emph{Index} and \emph{Order} functions of Magma return inconclusive answers for many of the cases when working inside
 of $\Gamma(5,2,2,6,2,3)$. Thus, we must work inside of $\Gamma(5,2,2,3,3,3)$.

\begin{center}
\begin{figure}
\begin{tabular}{||c|c||c|c|c|c|c|c|c|c|c||c|c||}
\hline
\hline
$\infty$ & 1 & 2 & 3 & 4 & 5 & 6 & 7 & 8 & 9 & 10 & Index & Conj. class\\ 
\hline
0 & 1 &  0 & 0 & 0 & 0 & - & - & - &- & - & 1 & -\\
0 & 1 &  0 & 0 & 0 & 1 & - & - & - &- & - & 1 & -\\
0 & 1 &  0 & 0 & 0 & 2 & - & - & - &- & - & 1 & -\\
0 & 1 &  0 & 0 & 1 & - & - & - & - &- & - & 1 & -\\
0 & 1 &  0 & 0 & 2 & 0 & 0 & 0 & - &- & - & 60 & I\\
0 & 1 &  0 & 0 & 2 & 0 & 0 & 1 & - &- & - & 1 & -\\
0 & 1 &  0 & 0 & 2 & 0 & 0 & 2 & - &- & - & 60 & IIA\\
0 & 1 &  0 & 0 & 2 & 0 & 1 & - & - &- & - & 60 &  IIB\\
0 & 1 &  0 & 0 & 2 & 0 & 2 & - & - &- & - & 1 & -\\
0 & 1 &  0 & 0 & 2 & 1 & - & - & - &- & - & 1 & -\\
0 & 1 &  0 & 0 & 2 & 2 & 0 & - & - &- & - & 60 & IIB\\
0 & 1 &  0 & 0 & 2 & 2 & 1 & - & - &- & - & 60 & IIA\\
0 & 1 &  0 & 0 & 2 & 2 & 2 & - & - &- & - & 1 & -\\
0 & 1 &  0 & 1 & 0 & - & - & - & - &- & - & 60 & I\\   
0 & 1 &  0 & 1 & 1 & - & - & - & - &- & - & 1 & -\\   
0 & 1 &  0 & 1 & 2 & 0 & - & 0 & - &- & - & 60 & IIA\\   
0 & 1 &  0 & 1 & 2 & 0 & - & 1 & - &- & - & 1 & -\\   
0 & 1 &  0 & 1 & 2 & 0 & - & 2 & - &- & - & 60 & IIB\\   
0 & 1 &  0 & 1 & 2 & 1 & - & - & - &- & - & 1 & -\\   
0 & 1 &  0 & 1 & 2 & 2 & - & - & - &- & - & 1 & -\\   
0 & 1 &  0 & 2 & 0 & - & - & - & - &- & - & 1 & -\\   
0 & 1 &  0 & 2 & 1 & - & - & - & - &- & - & 1 & -\\   
0 & 1 &  0 & 2 & 2 & - & - & - & - &- & - & 1 & -\\   
0 & 1 &  1 & 0 & - & - & - & - & - &- & - & 1 & -\\   
%
0 & 1 &  1 & 1 & - & - & - & - & - &- & - & 1 & -\\   
%
\hline
\end{tabular}
\caption{\label{fig:TableFirstPart} The indices of the groups generated by elements conjugate to $m_{\infty,0}^{\pm 1}$ in $\Gamma(5,2,2,3,3,3)$.}
\end{figure}
\end{center}

\begin{center}
\begin{figure}
\begin{tabular}{||c|c||c|c|c|c|c|c|c|c|c||c|c||}
\hline
\hline
$\infty$ & 1 & 2 & 3 & 4 & 5 & 6 & 7 & 8 & 9 & 10 & Index & Conj. class\\ 
\hline
0 & 1 &  1 & 2 & - & - & - & - & - &- & - & 1 & -\\   
%
%
0 & 1 &  2 & 0 & - & - & - & - & - &- & - & 1 & -\\   
%
0 & 1 &  2 & 1 & 0 & 0 & 0 & 0 & - &- & - & 1 & -\\   
0 & 1 &  2 & 1 & 0 & 0 & 0 & 1 & - &- & - & 1 & -\\   
0 & 1 &  2 & 1 & 0 & 0 & 0 & 2 & 0 & 0 & - & 1 & -\\   
0 & 1 &  2 & 1 & 0 & 0 & 0 & 2 & 0 & 1 & 0 & 1 & -\\   
0 & 1 &  2 & 1 & 0 & 0 & 0 & 2 & 0 & 1 & 1 & 1 & -\\   
0 & 1 &  2 & 1 & 0 & 0 & 0 & 2 & 0 & 1 & 2 & 1 & -\\   
0 & 1 &  2 & 1 & 0 & 0 & 0 & 2 & 0 & 2 & - & 1 & -\\   

0 & 1 &  2 & 1 & 0 & 0 & 0 & 2 & 1 & - & - & 1 & -\\   
0 & 1 &  2 & 1 & 0 & 0 & 0 & 2 & 2 & - & - & 1 & -\\   

0 & 1 &  2 & 1 & 0 &0& 1 &  - & - &- & - & 1 & -\\   
0 & 1 &  2 & 1 & 0 &0& 2 &  - & - &- & - & 1 & -\\   

0 & 1 &  2 & 1 & 0 & 1 & - & - & - &- & - & 1 & -\\   
0 & 1 &  2 & 1 & 0 & 2 & - & - & - &- & - & 1 & -\\   
%
0 & 1 &  2 & 1 & 1 & - & - & - & - &- & - & 1 & -\\   
0 & 1 &  2 & 1 & 2 & - & - & - & - &- & - & 1 & -\\   
0 & 1 &  2 & 2 & 0 & - & - & - & - &- & - & 1 & -\\   
0 & 1 &  2 & 2 & 1 & - & - & - & - &- & - & T & -\\   
0 & 1 &  2 & 2 & 2 & - & - & - & - &- & - & 1 & -\\

\hline
\end{tabular}
\caption{\label{fig:TableSecondPart} The indices of the groups generated by elements conjugate to $m_{\infty,0}^{\pm 1}$ in $\Gamma(5,2,2,3,3,3)$ continued.}
\end{figure}
\end{center}

%
%
%
%
%
%
%
%
%
%
%
%
%
%
%
%
%
%
%
%
%
%
%
%
%
%
%
%
%
%
%
%
%
%
%
%
%
%
%
%
%
%
%
%
%
%
%
%
%
%
%
%
%
%
%
%
%
%
%
%
%
%
%
%
%
%
%
%
%
%
%
%
%
%
%
%
%
%
%
%
%
%
%
%
%
%
%
%
%
%
%
%
%
%
%
%
%
%
%
%
%
%
%
%
%
%
%
%
%
%
%
%
%
%
%
%
%
%
%
%
%
%
%
%
%
%
%

\bibliographystyle{plain}
\bibliography{NRHBib}

\end{document}